\documentclass[10pt]{amsart}
\usepackage{amssymb}
\usepackage{amsmath}
\usepackage{amsthm}
\parskip=2ex
\usepackage{graphicx}
\usepackage{color}

\title[A normal form]{A normal form around a Lagrangian submanifold of radial points}
\author{Nick Haber}
\date{\today}
\thanks{The author was partially supported by the Department of
  Defense (DoD) through the National Defense Science \& Engineering
  Graduate Fellowship (NDSEG) Program and a National Science
  Foundation Graduate Research Fellowship under Grant No. DGE-0645962}
\subjclass[2010]{35P25, 35S05}

\numberwithin{equation}{section}
\newtheorem{thm}{Theorem}[section]
\newtheorem{lem}[thm]{Lemma}
\newtheorem{prop}[thm]{Proposition}

\theoremstyle{remark}
\newtheorem*{rmk}{Remark}
\theoremstyle{definition}
\theoremstyle{definition}
\newtheorem{define}[thm]{Definition}


\def\WF{\mathrm{WF}}

\def\ccot{\overline{T}^*X}
\def\ccotr{\overline{T}^*\R^n}

\newcommand\dpoiss[1]{ \{ \{ #1 \} \} }

\newcommand{\R}{\mathbb R}

\newcommand{\C}{\mathbb C}
\newcommand{\Z}{\mathbb Z}

\newcommand{\II}{\mathcal I}

\newcommand{\RR}{\mathcal R}


\def\Id{\operatorname{Id}}

\begin{document}

\maketitle

\begin{abstract}
In this work we produce microlocal normal forms for
pseudodifferential operators which have a Lagrangian submanifold of
radial points. This answers natural questions about such operators and
their associated classical dynamics. In a sequel, we will give a
microlocal parametrix construction, as well as a construction of a
microlocal Poisson operator, for such pseudodifferential operators.
\end{abstract}

\section{Introduction} \label{sect:introduction}

This paper gives a microlocal normal form for a homogeneous
pseudodifferential operator $P$ with real valued homogeneous principal
symbol $\sigma_m(P)$, when the Hamilton vector field corresponding to $\sigma_m(P)$ is
radial on a Lagrangian submanifold of radial points. We show that any such operator is microlocally equivalent (see Definition~\ref{def:microlocallyequivalent}) to 
$$z D_z + p_0(y)$$
on $\R_z \times \R^{n-1}_y$, with $p_0$ a smooth function, around $z =
0, y = 0, \eta = 0, \zeta = 1$. Here $\zeta$ is dual to $z$, $\eta$ is
dual to $y$, and $D_z = \frac{1}{i} \partial_z$
.
Radial points have received considerable attention in various contexts. In the setting of the
homogeneous pseudodifferential calculus, Guillemin and Schaeffer \cite{guilleminschaeffer}
gave a microlocal normal form around an isolated ray of radial points under
certain generic assumptions and studied propagation of singularities
under further assumptions. Vasy \cite{kerr-desitter} studied the
propagation of singularities around a conic Lagrangian submanifold of radial
points in the homogeneous and high-energy contexts, and Vasy and the author \cite{lag_rad2}
microlocalized these results in the homogeneous setting.
Bony, Fujii\'{e}, Ramond, and Zerzeri
\cite{bony} have results for an analogous situation in the semiclassical setting. Radial points
play an important role in analysis on scattering manifolds, first
introduced by Melrose \cite{melroseasymp}. Relevant analysis
around radial points in the scattering setting includes the works of
Melrose and Zworski \cite{melrosezworski}, Herbst and Skibsted
\cite{herbstskibstedscattering, herbstskibstedabsence}, Hassell, Melrose, and Vasy
\cite{hmv1, hmv2}, and Hassell and Vasy
\cite{hassellvasy_spectral_projections}.

{\em Acknowledgements}
I would like to thank Andr\'{a}s Vasy for countless helpful
conversations, as well as Richard Melrose for his helpful comments.

\section{Background and statement of result}

We first define this notion of radial points. Let $X$ be a smooth
manifold, and let $o$ be the zero section of $T^*X$. Denote by $\mu :
T^*X \backslash o  \times \R_{> 0} \rightarrow T^*X \backslash o$ the natural dilation
of the fibers: given $v \in T^*_x X$, $v \neq 0$, 

\begin{equation} \label{R+action}
\mu((x,v), t) = (x,tv).
\end{equation}

 We let $R$ be the vector field on $T^*X \backslash o$ which
generates this action, that is, $R$ is defined by
$$f(\cdot) \mapsto \frac{d}{dt}|_{t = 1} f(\mu(\cdot, t))$$
for $f \in C^\infty(T^*X \backslash o)$. We call $R$ the {\em radial
  vector field}. In local canonical coordinates $(x, \xi)$, $R = \xi
\cdot \partial_\xi$.

We call a set $\Gamma \subseteq T^*X \backslash o$ {\em conic} if this
$\R_{>0}$-action acts on it. We call a function $f \in C^\infty(T^*X
\backslash 0)$ homogeneous of degree $m$ if $f(x, tv) = t^m f(x, v)$ for $t > 0$.

{\em Throughout the paper, we let $\Psi^m(X) = \Psi^m_{\mathrm{cl}}(X)$
denote the space of classical pseudodifferential operators on $X$ of order $m$, that is,
pseudodifferential operators with polyhomogeneous 1-step expansions.} Given $P \in \Psi^m(X)$ with
homogeneous real-valued principal symbol $\sigma_m(P)$, we let $H_{\sigma_m(P)}$ denote the
corresponding Hamilton vector field.

\begin{define}
We say that $P$ and $H_{\sigma_m(P)}$ are {\em radial} at $q \in T^*X \backslash
o$ if $H_{\sigma_m(P)}$ is a scalar multiple of the radial vector field at $q$,
and we call $q$ a {\em radial point} (of $P$).
\end{define}

\subsection{Previous results on normal forms around radial points} \label{sect:previous}

Here we highlight two previous results on microlocal normal forms
around radial points. Both of these correspond to isolated radial
points, one in the homogeneous setting, and the other in the
scattering setting. In the homogeneous setting, this means that there
is an isolated ray of radial points, as the set of radial points is
conic. In the scattering setting, this means that there is an isolated
zero of the rescaled scattering Hamilton vector field on the boundary
`at infinity' (these two settings are are equivalent, as can be seen by a local
Fourier transform - see \cite[Section 3.1]{hmv2}).

We briefly say what we mean by microlocal normal form in this
context. Given $P \in \Psi^m(X)$, we let $\WF'(P) \subset T^*X
\backslash o$ be the microsupport of $P$, that is, $q \notin \WF'(P)$ if
there exists $A \in \Psi^0(X)$ elliptic at $q$ such that $AP \in
\Psi^{-\infty}(X)$. We refer the reader to H\"{o}rmander \cite{hormander_fio1} for
the definition and basic properties of Fourier integral operators. The notion of `homogeneous symplectomorphism' is standard. For us it is enough to say that given smooth manifolds $X$ and $Y$, there are the $\R_{>0}$ dilations $\mu_X$ and $\mu_Y$ on $T^*X \backslash o$ and $T^*Y \backslash o$, respectively, as in \eqref{R+action}. We say that a symplectomorphism 
$$f: T^*X \backslash o \rightarrow T^*Y \backslash o$$
is homogeneous if $f \circ \mu_X = \mu_Y \circ f$.

\begin{define} \label{def:microlocallyequivalent}
Given smooth manifolds $X$ and $Y$ of the same dimension and points
$q_X \in T^*X \backslash o, q_Y \in T^*Y \backslash o$, we say that
(the microlocal germ of) $P_X \in \Psi^{m_X}(X)$ at $q_X$ is {\em microlocally
  equivalent} to (the microlocal germ of) $P_Y \in \Psi^{m_Y}(Y)$ at $q_Y$ if there are
\begin{itemize}
 \item a Fourier integral operator $F$ of order $0$ quantizing a local
   homogeneous symplectomorphism sending a neighborhood of $q_Y$ to a
   neighborhood of $q_X$ (and sending $q_Y$ to $q_X$) which is
   elliptic at the point corresponding to $(q_X, q_Y)$ in the
   canonical relation, and
\item $E \in \Psi^{m_X - m_Y}(X)$, elliptic at $q_X$,
\end{itemize}
so that
$$q \notin \WF'(P_X - E F P_Y F^{-1}).$$
Here $F^{-1}$ is a microlocal parametrix of $F$ at the point
corresponding to $(q_X, q_Y)$ in the canonical relation. 
\end{define}

We follow the standard convention (see for instance
\cite{guilleminschaeffer, hmv2}) in calling explicit operators which
are microlocally equivalent to a class of operators of interest a
(microlocal) {\em normal form}. For instance, Duistermaat and H\"{o}rmander
\cite{duistermaathormander} showed that if $P \in \Psi^m(X)$ is not radial at $q$
then (the microlocal germ of) $P$ at $q$ is
microlocally equivalent to
$$D_{x_n} = \frac{1}{i} \partial_{x_n}$$
on $\R^n$ at $x = 0, \xi_n = 1, \xi_i = 0, i \neq n$, where $\dim X =
n$.

The first result we highlight is due to Guillemin and Schaeffer
\cite{guilleminschaeffer}, who launched the study of radial
points. They study the class of classical pseudodifferential operators with real principal
symbol and an isolated ray of radial points, under a generic assumption on the Hamilton
vector field (really an assumption on the linear part of it, which may
be thought of as a nonresonant assumption - see Sternberg
\cite{sternberg_local_1, sternberg_local_2, sternberg_local_3} for the
original work and Nelson \cite[Section 3]{nelson_flows} for further exposition
useful for our purposes). They show
that such an operator is, at any point on the ray of radial points,
microlocally equivalent to
$$\sum_{i, j < n} \biggl( a_{ij} D_{x_i} D_{x_j} + b_{i j} x_j D_{x_j}
D_{x_n} + c_{ij} x_i x_j D^2_{x_n} \biggr) + u x_n D^2_{x_n} + \lambda
D_{x_n}$$
on $\R^n$ at $x = 0, \xi_n = 1, \xi_i = 0, i \neq n$, for some
$a_{ij}, b_{ij}, c_{ij}, u \in \R, \lambda \in \C$ \cite[Theorem
4.1]{guilleminschaeffer}. They go on to prove propagation of
singularities results under a further limited class of such operators
- limited because, even under the nonresonant assumption, there is a
wide variety of associated classical dynamics which complicates the
picture.

We should note here that Herbst and Skibsted \cite{herbstskibstedscattering,
  herbstskibstedabsence} consider a similar situation, but in the
scattering setting.

The second result we highlight is due to Hassell, Melrose, and Vasy
\cite{hmv2}, in the scattering setting. In this
work, they remove the generic nonresonant assumptions, and in doing
so pick up resonant terms in their normal forms. These
further complicate, for instance, the microlocal form of solutions.

\subsection{Lagrangian submanifolds of radial points}

As noted earlier, in this paper we focus on the situation in which the
operator has a conic Lagrangian submanifold of radial points, in
contrast to an isolated ray of radial points. This comes up naturally in
several contexts. For instance, these arise in the scattering setting
for $\Delta - \lambda$,
$\lambda \in \R_{> 0}$ on a scattering manifold, as first seen by Melrose
\cite{melroseasymp}. They also appear in the work of Vasy
\cite{kerr-desitter, vasy_asymptotically_hyperbolic,
  vasy_analytic_continuation} in a variety of ways. An essential theme of this paper is that
analysis around a Lagrangian submanifold of radial points is much
simpler than analysis around an isolated ray of radial points. Indeed,
in the scattering setting, this is the free particle case (and the
case with a sufficiently decaying potential - see \cite{melroseasymp})
, whereas
isolated radial points arise when a symbolic potential of order $0$ is
added \cite{hmv1, hmv2}.

So that we can refer to all assumptions succinctly later, we define
the class $\RR^m(X, \Lambda)$, given a smooth manifold $X$, which we fix
to have dimension $n$, and a conic Lagrangian submanifold $\Lambda
\hookrightarrow T^*X \backslash o$. We say that $P \in \RR^m(X,
\Lambda)$ if
\begin{itemize}
  \item $P \in \Psi^m(X)$ (classical, i.e. polyhomogeneous 1-step),
   \item $P$ has homogeneous real-valued principal symbol $\sigma_m(P)$,
     \item $\Lambda \hookrightarrow \Sigma(P)$, the characteristic
       variety, 
     \item $H_{\sigma_m(P)}$ is radial on $\Lambda$, and
       \item $d(\sigma_m(P))$ does not vanish on $\Lambda$.
\end{itemize}

\subsection{Statement of result}

In \cite{lag_rad2}, Vasy and the author analyze the propagation of
singularities for $\RR^m(X, \Lambda)$, generalizing results of Melrose
\cite[Section 8]{melroseasymp} and Vasy \cite[Section
2.4]{kerr-desitter}. These propagation of singularity statements
depend on the value of the imaginary part of the subprincipal symbol of $P \in
\RR^m(X, \Lambda)$ at the point of interest. As stated more formally
and generally in
\cite[Theorem 1.4]{lag_rad2}, below a certain threshold determined by
$m$ and the value of the imaginary part of the subprincipal symbol at
$q$, microlocal Sobolev regularity for solutions $u$ to $Pu = 0$ propagate to
$q$. Assuming microlocal Sobolev regularity of $u$ at $q$ just above this
threshold, $u$ is smooth at $q$. Thus, a normal form for $P$ should
depend on a subprincipal term, at least varying along $\Lambda$. As we
show here, the microlocal equivalence classes of $\RR^m(X, \Lambda)$
are essentially parametrized by subprincipal behavior at $\Lambda$.

For the remainder of the paper, we let $(y, z)$ be coordinates on
$\R^{n-1}_y \times \R_z$, with dual coordinates $(\eta, \zeta)$ for
the cotangent fibers. Given any $p_0 \in C^\infty(\R^{n-1})$,
$$z D_z + p_0(y)$$
is radial on $N^*\{z = 0\}$, as the Hamilton vector field is
$z \partial_z - \zeta \partial_\zeta$. All $P \in \RR^m(X, \Lambda)$
are microlocally equivalent to some such operator.

\begin{thm} \label{thm:normalform}
Given $P \in \RR^m(X, \Lambda)$ and $q \in \Lambda$, $P$ at
$q$ is microlocally equivalent to $zD_z + p_0(y)$ at $q_0 = (y = 0, z
= 0, \eta = 0, \zeta = 1)$ for some $p_0 \in C^\infty(\R^{n-1})$.
\end{thm}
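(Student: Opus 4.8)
The plan is to proceed in two stages: first use symplectic geometry and Egorov's theorem to reduce the classical picture to a standard form, and then peel off the lower-order terms one at a time using a transport-equation argument.

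\textbf{Step 1: Normalizing the classical picture.} First I would show that the pair $(\Lambda, \Sigma(P))$ is, via a homogeneous symplectomorphism, locally equivalent to $(N^*\{z=0\}, \{z\zeta + \text{something}\})$ near $q_0$. The key geometric inputs are that $\Lambda$ is a conic Lagrangian, that $\Lambda \subset \Sigma(P)$ is a hypersurface through which $\Lambda$ passes, that $H_{\sigma_m(P)}$ is radial on $\Lambda$ — hence tangent to $\Lambda$ and to $\Sigma(P)$ — and that $d(\sigma_m(P)) \neq 0$ on $\Lambda$. Since any conic Lagrangian not meeting the zero section is, after a homogeneous symplectomorphism, a conormal bundle $N^*\{z=0\}$ (Hörmander), we may arrange $\Lambda = N^*\{z=0\}$, $q_0 = (y=0,z=0,\eta=0,\zeta=1)$. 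The characteristic variety $\Sigma(P)$ is then a hypersurface containing $N^*\{z=0\}$; writing it as the zero set of a homogeneous function vanishing on $\{z=0,\eta=0\}$ and using the radiality of $H_{\sigma_m(P)}$ on $\Lambda$ to control the linearization transverse to $\Lambda$, a further homogeneous symplectomorphism (fixing $\Lambda$) should bring $\sigma_m(P)$ into the form $e \cdot (z\zeta + \text{(function of }y\text{)}\cdot \zeta^{?})$ for an elliptic factor — more precisely, after also dividing by an elliptic pseudodifferential factor $E$ (homogeneity degree $m-1$), into $z\zeta$ exactly modulo functions vanishing to second order at $\Lambda$. Quantizing the symplectomorphism by an elliptic order-zero FIO $F$ and applying Egorov's theorem, we reduce to $P' = EFPF^{-1}$ with $\sigma_1(P') = z\zeta$ near $q_0$.

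\textbf{Step 2: Removing lower-order terms by transport equations.} Now $P' \in \Psi^1$ with principal symbol $z\zeta$. The next move is to conjugate by elliptic pseudodifferential operators $I + \text{(lower order)}$ and by $1$-parameter families generated by such, to successively kill the homogeneous terms of the full symbol of $P'$ of degrees $0, -1, -2, \dots$ in $\eta,\zeta$, except for a residual function of $y$ at degree $0$. Conjugating $P'$ by $e^{iB}$ for $B \in \Psi^{-1}$ changes the subprincipal symbol by $H_{z\zeta}(b_0)$ where $b_0$ is the principal symbol of $B$; since $H_{z\zeta} = z\partial_z - \zeta\partial_\zeta$, the relevant transport operator along the flow is $z\partial_z - \zeta\partial_\zeta$ acting on homogeneous functions, and on $\Lambda = \{z=0,\eta=0\}$ this vector field is radial, so its action on degree-$0$ functions kills exactly the $y$-dependence obstruction and leaves a free function $p_0(y)$ — this is precisely the subprincipal invariant flagged in the introduction. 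So at the subprincipal level we can reach $z\zeta$ plus $p_0(y)$ modulo $\Psi^{-1}$. Iterating this at each lower order — at step $k$ solving $(z\partial_z - \zeta\partial_\zeta + c_k)b = (\text{obstruction})$ for the principal symbol $b$ of the next conjugating operator, where the constants $c_k$ coming from the commutator's scaling ensure solvability away from the resonant value — and using an asymptotic summation (Borel) to assemble the infinitely many conjugations into a single elliptic pseudodifferential conjugator, we arrive at an operator whose full symbol agrees with that of $zD_z + p_0(y)$ near $q_0$, i.e. the difference is in $\Psi^{-\infty}$ microlocally, which is exactly microlocal equivalence.

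\textbf{Main obstacle.} I expect the crux to be Step 1, specifically the normalization of $\sigma_m(P)$ near $\Lambda$ by a homogeneous symplectomorphism fixing $\Lambda$: one must exploit the interplay between the hypersurface $\Sigma(P)$, the Lagrangian $\Lambda \subset \Sigma(P)$, and the radiality condition (which pins down the linearization of $H_{\sigma_m(P)}$ transverse to $\Lambda$ to be, up to scalar, the radial scaling) to get a clean model $z\zeta$ — rather than, as in the isolated-radial-point case of Guillemin--Schaeffer, a quadratic form with moduli. The point is that radiality \emph{everywhere along a Lagrangian} is far more rigid than radiality at a point, collapsing the would-be quadratic normal form to the simple $z\zeta$; making this rigidity precise, and checking that the symplectomorphism can be chosen homogeneous and to fix $q_0$, is where the real work lies. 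Step 2 is then a standard, if lengthy, transport-equation induction, with the only subtlety being to confirm solvability at each order (no resonance) and that the sole surviving invariant is the function $p_0(y)$.
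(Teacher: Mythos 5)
Your overall architecture matches the paper's: normalize the Lagrangian to $N^*\{z=0\}$ via a homogeneous symplectomorphism, reduce the principal symbol to $z\zeta$, then conjugate away lower-order terms inductively using transport equations generated by $H_{z\zeta}=z\partial_z-\zeta\partial_\zeta$, Borel-summing the corrections. Your reading of the classical dynamics and of where the residual $p_0(y)$ comes from (it lies in the cokernel of $\theta\cdot\partial_\theta-z\partial_z$ at degree zero) is also right. However, there are two genuine gaps that coincide with what is actually the technical core of the paper.

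First, in Step 1 you assert that after a homogeneous symplectomorphism and division by an elliptic factor one obtains $\sigma_1(P')=z\zeta$ \emph{exactly} near $q_0$, but the argument you give only yields $z\zeta$ modulo the ideal $\II^2$ of functions vanishing to second order on $\Lambda$ (indeed, $p=\lambda(y)z \bmod \II^2$ follows immediately from radiality and nondegeneracy, with no further symplectomorphism needed). The jump from ``$z\zeta$ mod $\II^2$'' to exact equality is not automatic and is precisely the content of the paper's Lemmas~\ref{lem:normalwitherror} and~\ref{lem:principalnormal}: first a formal power-series argument in the grading by $\II^k/\II^{k+1}$, using the Lagrange bracket $\dpoiss{z,\cdot}$ and the fact that $\dpoiss{z,f(y)z^a\theta^\alpha}=(a+|\alpha|-1)f z^a\theta^\alpha$ is nonresonant for $a+|\alpha|\geq 2$, and then a Sternberg-type linearization (Nelson's Theorem plus Guillemin--Schaeffer's contact-transformation limit theorem) to remove the remaining $\II^\infty$ error. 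You acknowledge in your ``Main obstacle'' paragraph that ``making this rigidity precise is where the real work lies,'' but a proof proposal needs to at least sketch this machinery; as it stands, the hardest step is asserted rather than argued. Note also that these $\II^2$-and-higher corrections sit at the same homogeneity degree as the leading term, so they cannot be absorbed into your Step~2 induction over decreasing orders.

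Second, in Step 2 the transport equations $(z\partial_z-\zeta\partial_\zeta+c_k)b=\text{obstruction}$ are solvable as formal power series at $\Lambda$ (your nonresonance observation is correct), but that only produces a solution modulo a function vanishing to infinite order at $\Lambda$, not a genuine solution in a fixed neighborhood of $q_0$: the vector field degenerates along $\Lambda$, so one cannot simply integrate. Removing this $\II^\infty$ error requires a separate argument, which in the paper is Lemma~\ref{lem:vanishingerror} (an adaptation of Guillemin--Schaeffer's integration along stable/unstable manifolds). Your Borel summation handles the induction over orders, but it does not address solvability at each fixed order; that issue is left untreated.
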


The proof will proceed as follows. First, we will show that all
operators in $\RR^m(X, \Lambda)$ are microlocally equivalent at the
level of principal symbols. This is a two-step argument, following
those of Guillemin and Schaeffer (\cite[Section
3]{guilleminschaeffer}), and Hassell, Melrose, and Vasy (\cite[Section
3.2]{hmv1}). These in turn adopt Nelson's proof of the Sternberg
linearization theorem (\cite[Chapter 3]{nelson_flows}), and we use
this work directly at times. 

In the first step, we show that we can construct appropriate symbols at the level of formal power series expansions in degree of vanishing at the Lagrangian submanifold. By Borel's lemma, this is equivalent to achieving the normal form for our principal symbol up to an error term vanishing to infinite order on the Lagrangian submanifold. In the second step, we solve away this error term, again adapting the above sources.

After achieving our normal form at the principal symbol level, we
follow the methods of \cite{guilleminschaeffer} to deal with
lower-order terms in the full expansion. Our assumption that all
pseudodifferential operators have classical, one-step homogeneous
expansions is needed precisely here; at the principal symbol level,
only the principal symbol of the operator need be homogeneous. One
should be able to make similar statements without assuming that our
operators are classical. Here we again eliminate lower-order terms by a formal power
series argument, and then we can exploit the results in
\cite{guilleminschaeffer} directly to solve away the error which
vanishes to infinite order at the Lagrangian submanifold. Indeed, much
of this argument amounts to checking that \cite{guilleminschaeffer}
carries over to our setting.

\subsection{Further background}

Here we provide some further background which is needed/convenient for
the proof of Theorem~\ref{thm:normalform}

Given a smooth manifold $X$, let 
$$\kappa: T^*X \backslash o \rightarrow (T^*X \backslash o)/\R_{>0} =
S^*X$$
be the quotient map identifying the orbits of $\mu$ as defined by
\eqref{R+action}. $S^*X$ is called the cosphere bundle of $X$. As is
standard, $S^*X$ has a canonical contact structure. Given two smooth manifolds $X$ and $Y$, there is a natural correspondence between contact transformations
$$S^*X \rightarrow S^*Y$$
and homogeneous symplectomorphisms
$$T^*X \backslash o \rightarrow T^*Y \backslash o.$$
For a discussion of this, see \cite{guilleminschaeffer}, just after Lemma~4.2.

In \cite[Sections 1 and 5]{melroseasymp}, Melrose introduces the compactified cotangent bundle of a smooth manifold $X$, which we denote by $\ccot$. $\ccot$ is a disk bundle, whose interior is naturally identified with $T^*X$, and whose boundary is naturally identified with $S^*X$. Given coordinates $(y, z, \eta, \zeta)$ for $T^*\R^n$ as introduced immediately before the statement of Theorem~\ref{thm:normalform}, we can define new coordinates in $\zeta \neq 0$: we keep $y$ and $z$, and
\begin{align}
\theta & = \frac{\eta}{\zeta} \label{coordinates1} \\
\rho& = \frac{1}{\zeta} \label{coordinates2}
\end{align}

We can then extend this coordinate chart to the boundary of $\ccotr$ by setting $\rho = 0$ on the boundary. $\rho$ is then a local boundary defining function for $\ccotr$.

Please see \cite[Sections 1.3 and 2.1.1]{lag_rad2} for a further
description of the cosphere bundle and compactified cotangent
bundle. As is gone over in detail there, the Hamilton vector field
induces flows on both the cosphere bundle and the compactified
cotangent bundle. This is easy to see: as $H_{\sigma_m(P)}$ is
homogeneous of degree $m-1$, we may rescale it with an elliptic factor
to make it homogeneous of degree $0$. This extends to be a b-vector
field on $\ccot$. This gives a flow $\phi_t$, which depends on the
elliptic factor chosen, but different choices just give a
reparametrization. In \cite{lag_rad2}, Vasy and the author show that
$L = \partial \overline{\Lambda}$, the boundary of $\Lambda$ in $\ccot
\backslash o$ (identifying the boundary of $\ccot$ with $S^*X$, this
is the image of $\Lambda$ under the quotient map $\kappa$), is either
a submanifold of sinks or sources under this flow. Assuming, for
concreteness, that it is a manifold of sinks, one can then ask the
following question.
\begin{itemize}
	\item Is the map
$$x \mapsto \lim_{t \rightarrow \infty} \phi_t(x),$$
defined in a neighborhood of $\Lambda$, smooth across $L$?
\end{itemize}
Given the appearance of resonances in the isolated ray case, we have
another, related, question.
\begin{itemize}
  \item Do such resonances appear for the case of $\RR^m(X, \Lambda)$?
\end{itemize}
Theorem~\ref{thm:normalform} answers the first question in the affirmative, and the second in the negative.

\section{Principal symbol argument}

In this section, we prove that all operators in $\RR^m(X, \Lambda)$
are microlocally equivalent at the level of principal symbols. Given
$P \in \RR^m(X, \Lambda)$ and $q \in \Lambda$, we can multiply it by
an elliptic operator of order $1 - m$, and thus we can assume that $P
\in \RR^1(X, \Lambda)$. As is standard (see, for instance \cite[Theorem 21.2.8]{hormander3}),
we can choose an homogeneous local symplectomorphism to a neighborhood
in which the coordinates \eqref{coordinates1} - \eqref{coordinates2}
are valid, which maps $q$ to $q_0 = (y = 0, z = 0, \eta = 0, \rho =
1)$ and $\Lambda$ to a conic neighborhood of $q_0$ inside $\{z = 0,
\theta = 0\}$. We can then choose a Fourier integral operator
corresponding to this local symplectomorphism, which is elliptic
locally in this region, and reduce to the case where $P \in
\RR^1(\R^n, \{z = 0, \theta = 0, \zeta > 0\})$ and the point of
interest $q$ is $q_0$. We set
\begin{equation} \label{fixLambda}
\Lambda = \{z = 0, \theta = 0, \zeta > 0 \},
\end{equation}
$\overline{\Lambda}$ its closure in $\ccotr \backslash o$, and
\begin{equation*}
L = \{z = 0, \theta = 0, \rho = 0\} = \partial(\overline \Lambda) \subset \ccotr \backslash o.
\end{equation*}
We let
\begin{equation} \label{fixq0}
q_\infty = (z = 0, y = 0, \theta = 0, \rho = 0),
\end{equation}
i.e. the image of $q_0$ in $S^*\R^n$.

In what follows, we switch between considering the symplectic
structure on $T^*\R^n \backslash o$ and the contact structure on
$S^*\R^n$, depending on what is convenient. We thus define the
rescaled principal symbol of $P$ by $p = \rho \sigma_1(P)$, chosen to
be homogeneous of degree $0$. We identify it with a smooth function on
$S^*\R^n$, as we do generally for all functions on $T^*\R^n \backslash o$
which are homogeneous of degree $0$.

The following lemma shows that all such $P$ are microlocally
equivalent at the level of principal symbol, up to an error term vanishing to infinite order at the radial set $\Lambda$. Let $\II$ be the ideal of smooth functions on $S^*\R^n$ vanishing on $L$. That is,
$$\II = z C^\infty(S^*\R^n) + \sum_i \theta_i C^\infty(S^*\R^n).$$
As above, we identify this with all homogeneous degree $0$
smooth functions on $T^*\R^n \backslash o$ which vanish on $\Lambda$.

\begin{lem} \label{lem:normalwitherror}
There exists a local homogeneous symplectomorphism $\phi$ which fixes
$q_0$ and a homogeneous degree zero smooth function $e$ with $e(q_0) \neq 0$ so that $e \rho \phi^*(\rho^{-1} p) = z \mod \II^{\infty}$.
\end{lem}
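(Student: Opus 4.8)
The plan is to work entirely on the contact side, with $p$ a smooth function on $S^*\R^n$ that vanishes on $L$ (since $\Lambda \subset \Sigma(P)$) and has nonvanishing differential there (since $d\sigma_m(P)$ is nonzero on $\Lambda$), and whose associated contact vector field is tangent to $L$ (since $H_{\sigma_m(P)}$ is radial on $\Lambda$, i.e. the rescaled flow fixes $L$). The target is to bring $p$ to $z$ modulo $\II^\infty$ via pullback by a contact transformation, then translate back to the homogeneous symplectic picture — the elliptic factors $e$ and $\rho$, $\rho^{-1}$ are exactly the bookkeeping that converts ``contact pullback of $p$'' into ``conjugation of the operator''.

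\textbf{Step 1: normalize the linearization along $L$.} First I would examine the contact vector field $V$ generated by $p$ and its linearization along $L$. Because $V$ is tangent to $L$ and $p$ vanishes on $L$, in suitable coordinates $(y,z,\theta)$ adapted so that $L = \{z = 0, \theta = 0\}$ (and $q_\infty$ at the origin), one has $p = z\,a(y,z,\theta) + \theta\cdot b(y,z,\theta)$. The radial condition forces the normal linear part to be non-degenerate in a very rigid way: the linearized flow in the $(z,\theta)$ directions must have $z$ as an expanding (or contracting) direction with the $\theta$'s behaving compatibly. I expect that, after a preliminary contact transformation (a normalization of the $1$-jet of $p$ along $L$, using that $d p \neq 0$ on $L$ and the contact structure to straighten the contact form), one can arrange $p = z + z\cdot O(\text{vanishing on }L) + \theta\cdot(\cdots)$ — i.e. the leading behavior is $z$ up to terms in $\II^2$, and the $\theta$ coefficients can be absorbed. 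This step is essentially the ``linear part'' argument of Guillemin--Schaeffer \cite{guilleminschaeffer} and Hassell--Melrose--Vasy \cite{hmv1}, specialized to the Lagrangian case where the relevant linear algebra is much simpler than in the isolated-ray case (only one ``bad'' direction, $z$, rather than a full resonant spectrum).

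\textbf{Step 2: kill the higher-order error by a formal power series / homological argument.} With $p \equiv z \mod \II^2$ in good coordinates, I would set up an iteration in the order of vanishing at $L$: suppose $e_k \rho \,\phi_k^*(\rho^{-1}p) = z + r_k$ with $r_k \in \II^{k}$; I want to choose a new contact transformation $\psi$ close to the identity (generated by a vector field $W$ with coefficients in $\II^{k}$, so that $\psi = \exp(W)$ differs from the identity at order $k$) and a new elliptic correction so that the error improves to $\II^{k+1}$. The linearized equation to solve at each stage is a transport-type equation: write the new error's leading term and demand it vanish; this reduces to solving $\mathcal{L}_{\partial_z + (\text{rescaling})} g = f$ modulo $\II^{k+1}$ for $g$, where $f \in \II^k/\II^{k+1}$, and the operator here is, after the normalization of Step 1, essentially $z\partial_z$ acting on the graded pieces — which is invertible on the relevant components precisely because the normal eigenvalue structure is non-resonant in this setting (this is where the ``Lagrangian is simpler'' theme bites: no resonances, exactly as Theorem~\ref{thm:normalform} promises to show). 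Because this can be done at every order, Borel's lemma lets us assemble the $\phi_k$, $e_k$ into genuine $\phi$, $e$ with $e\rho\phi^*(\rho^{-1}p) = z \mod \II^\infty$ — which is all that is claimed, since the infinitely-vanishing remainder is dealt with in the next (second) step of the overall proof, not in this lemma.

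\textbf{Main obstacle.} The delicate point is Step 1: producing the right preliminary normalization of the $1$-jet of $p$ along $L$ as a \emph{contact} transformation (not merely a diffeomorphism), and checking that the radial hypothesis on $H_{\sigma_m(P)}$ translates into exactly the eigenvalue configuration needed to make the homological operator in Step 2 invertible on each graded piece. Concretely, I would need to verify that the linearization of the rescaled Hamilton flow transverse to $L$ has no resonances — i.e. that the one relevant positive eigenvalue (the $z$-direction, normalized to $1$) does not equal any nonnegative-integer combination of itself and the (zero) tangential eigenvalues in a way that obstructs solving the transport equations. Here I expect to invoke the Sternberg-type machinery as presented by Nelson \cite{nelson_flows} and used in \cite{guilleminschaeffer, hmv1}, checking that it applies verbatim; the payoff is that, unlike the isolated-ray case, the resonance conditions are automatically satisfied, so no resonant correction terms appear and the clean form $z$ (rather than $z$ plus resonant monomials) is attainable.
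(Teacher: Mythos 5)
Your overall architecture matches the paper's: use radiality plus nondegeneracy to fix the $1$-jet of $p$ along $L$, then run an order-by-order homological argument in the grading $\II^l/\II^{l+1}$, assemble the generating function with Borel's lemma, and defer the $\II^\infty$ error to the next step. One small correction to Step 1: no preliminary contact transformation is needed there. The explicit formula for $H_{\rho^{-1}a}$ in the coordinates $(y,z,\theta,\rho)$ shows that radiality on $\Lambda$ already forces $\partial_y p|_\Lambda = \partial_\theta p|_\Lambda = 0$, so $p = \lambda(y)\,z \mod \II^2$ with $\lambda \neq 0$ by nondegeneracy, and $\lambda$ is simply absorbed into the elliptic factor $e$; there are no linear $\theta$-terms to ``absorb'' by a change of variables.

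The genuine gap is at the inductive step: you never pin down the homological operator, and your tentative identification of it as ``essentially $z\partial_z$ acting on graded pieces'' is incorrect in a way that would break the argument if taken literally. The correct operator is the Lagrange bracket $\dpoiss{z,\cdot}$, where $\dpoiss{z,b} = -b + \{\rho^{-1}z, b\}$, and since $\{\rho^{-1}z,\theta_i\} = \theta_i$, $\{\rho^{-1}z, z\} = z$, $\{\rho^{-1}z, f(y)\} = 0$, it acts on a monomial $f(y)\,z^a\theta^\alpha$ by the scalar $a + |\alpha| - 1$: the $\theta\cdot\partial_\theta$ contribution and the $-1$ shift coming from the homogeneity weight are essential. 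Under literally $z\partial_z$, the pure-$\theta$ monomials $f(y)\theta^\alpha$ (with $a=0$) would lie in the kernel and could not be removed; nor can your per-stage elliptic corrections rescue them, since multiplying $z + r$ by $1 + g$ only alters terms divisible by $z$. With the correct bracket the eigenvalue is $a+|\alpha|-1 \geq 1$ on every monomial of degree $\geq 2$, so all higher-order terms are killable by an elementary computation, and no Sternberg/Nelson non-resonance machinery is needed at this stage at all — that machinery enters only in the subsequent lemma, to remove the error vanishing to infinite order on $L$. Once you replace your heuristic operator by this explicit bracket computation, your proposal becomes the paper's proof; as written, the key invertibility claim rests on the wrong operator and is left unverified.
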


\begin{proof}[Proof of Lemma~\ref{lem:normalwitherror}]
Given $a \in C^\infty(\ccotr \backslash o)$, we have
\begin{equation} \label{hamilton}
H_{\rho^{-1} a} = \partial_z a (\rho \partial_\rho + \theta \cdot \partial_\theta) - (\rho \partial_\rho a + \theta \cdot \partial_\theta a - a) \partial_z + \sum_i \partial_{\theta_i} a \partial_{y_i} - \partial_{y_i} a \partial_{\theta_i}.
\end{equation}
Requiring that the Hamilton vector field is radial on $\Lambda$ then amounts to $\partial_\theta p|_{\Lambda} = \partial_y p|_{\Lambda} = 0$, and our nondegeneracy condition lets us assume that $\partial_z p|_\Lambda \neq 0$. The radial condition then gives us that $p = \lambda(y) z \mod \II^2$, and the nondegeneracy condition gives us that $\lambda(q) \neq 0$. We can then absorb $\lambda$ into $e$, and assume that 
$$p = z \mod \II^2.$$

The symplectomorphism $\phi$ will be the time $1$ flow of a Hamilton
vector field corresponding to symbol $\rho^{-1} b$ (and thus the FIO
is $e^{i B}$ where $B$ is a quantization of $b$, with $b \in
C^\infty(S^*\R^n)$). As discussed in \cite[Section 2]{guilleminschaeffer},
there is a Lagrange bracket on $C^\infty(S^*\R^n)$ with a convenient
relation to the Poisson bracket. Given $a, b \in C^\infty(S^*\R^n)$, let
$\dpoiss{a,b} = \rho \{\rho^{-1} a, \rho^{-1} b\}$ (as above, we
identify smooth functions on $S^*\R^n$ with smooth functions on $T^*\R^n
\backslash o$ which are homogeneous of degree $0$ to make sense of
this formula). Note that by our explicit formula \eqref{hamilton},
$$\dpoiss{\cdot,\cdot}: \II^i \times \II^j \rightarrow \II^{i + j -
  1}.$$

If $\phi$ is the time $1$ flow of $H_{\rho^{-1} b}$, with $b \in \II^l$, $l > 0$, then 
\begin{equation}
\rho \phi^*(\rho^{-1} p) = p + \dpoiss{b, z} \mod \II^{l + 1}.
\end{equation}
To see this, note that we have the following formula, which converges in the sense of formal power series with respect to the grading $\II^l/\II^{l+1}$:
$$\rho (\exp{H_{\rho^{-1} b}}) \rho^{-1} = \Id + \mathrm{ad} \  b + \frac{1}{2}(\mathrm{ad}  \
b)^2 + \ldots$$
where $\mathrm{ad} \ b = \dpoiss{b, \cdot}.$ Thus all other terms are
higher order with respect to this grading.

Thus, to finish proving the lemma, we need simply to show that we can
choose $b$ to kill all terms of order $2$ and higher in the formal
power expansion with respect to the grading $\II^l/\II^{l+1}$. Having
shown that, we can then use Borel's lemma to promote $b$ to
$C^\infty(S^*\R^n)$ so that the time 1 flow kills off all but an error
term in $\II^{\infty}$.

Note that
$$\dpoiss{z, b}  = - b + \{ \rho^{-1} z, b\},$$
and by \eqref{hamilton},
\begin{align*}
\{\rho^{-1} z, \theta_i\} & = \theta_i \\
\{\rho^{-1} z, f(y)\} & = 0 \\
\{\rho^{-1} z, z\} & = z.
\end{align*}
We then have, for an arbitrary $f(y) z^a \theta^\alpha$, $a + |\alpha| = l$, $f$ smooth,
$$\dpoiss{z, f(y) z^a \theta^{\alpha}} = (a + |\alpha| - 1) f(y) z^a \theta^{\alpha}.$$
For $l> 1$, this coefficient is nonzero, so $b$ can be chosen as desired.

\end{proof}

We now remove the error term.

\begin{lem} \label{lem:principalnormal}
There exists a local homogeneous symplectomorphism $\phi$ which fixes
$q_0$ and $e \in T^*\R^n \backslash o$, homogeneous of degree $0$,  with $e(q_\infty) \neq 0$ so that $e \rho \phi^*(\rho^{-1} p) = z$.
\end{lem}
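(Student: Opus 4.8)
The plan is to upgrade the conclusion of Lemma~\ref{lem:normalwitherror} from "$z \bmod \II^\infty$" to "exactly $z$" by a single further correction, adapting the classical second step in the Sternberg-type arguments of Guillemin--Schaeffer \cite[Section 3]{guilleminschaeffer} and Hassell--Melrose--Vasy \cite[Section 3.2]{hmv1}. After applying Lemma~\ref{lem:normalwitherror} we may assume the rescaled principal symbol has the form $p = z + r$ with $r \in \II^\infty$, i.e. $r$ vanishes to infinite order at $L = \{z = 0, \theta = 0, \rho = 0\}$ (equivalently at $\Lambda$). We wish to find a local homogeneous symplectomorphism $\psi$, again realized as a time-one flow of a Hamilton vector field $H_{\rho^{-1}b}$, together with an elliptic rescaling, so that $e \rho\, \psi^*(\rho^{-1}(z + r)) = z$ exactly.

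First I would interpolate: set $p_s = z + s r$ for $s \in [0,1]$, so $p_0 = z$ and $p_1 = p$, and look for a one-parameter family of homogeneous symplectomorphisms $\psi_s$ (with $\psi_0 = \mathrm{Id}$) and elliptic factors $e_s$ with $e_s \rho\, \psi_s^*(\rho^{-1} p_s) = z$. Differentiating in $s$ produces a homological (Moser-type) equation: writing $\psi_s$ as the flow of a time-dependent vector field $H_{\rho^{-1} b_s}$ and $\dot e_s / e_s = c_s$, one needs to solve, along the characteristic flow of $z$ restricted near $L$,
\begin{equation*}
\dpoiss{b_s, z} + c_s\, z \;=\; -\, r \bmod (\text{lower order in } s\text{-derivative}),
\end{equation*}
which after using the computations from the proof of Lemma~\ref{lem:normalwitherror} (where $\dpoiss{z,\cdot}$ acts with eigenvalue $a + |\alpha| - 1$ on $f(y) z^a \theta^\alpha$) reduces to integrating along the flow generated by the radial-type vector field $z\partial_z + \theta\cdot\partial_\theta$. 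Because $r$ vanishes to infinite order at $L$, which is the attracting (or repelling) set of this flow by the sink/source structure established in \cite{lag_rad2}, the relevant flow integral converges and produces a solution $b_s, c_s$ that is smooth up to and across $L$; this is exactly the point where the infinite-order vanishing of the error, not merely finite-order, is essential. Integrating the resulting nonautonomous flow from $s = 0$ to $s = 1$ yields the desired $\psi = \psi_1$ and $e = e_1$, with $e(q_\infty) \neq 0$ since $e_s$ stays close to the value $1$ near $s = 0$ and the eigenvalue construction never degenerates on $z$ itself.

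The main obstacle will be the convergence and smoothness-across-$L$ of the flow integral solving the homological equation: one must show that integrating the infinitely-flat error $r$ along the contracting directions of $z\partial_z + \theta\cdot\partial_\theta$ yields a function that is genuinely smooth (not merely bounded) up to the boundary $\rho = 0$, uniformly in $s$, and that the resulting Hamilton flow $\psi_s$ stays within the coordinate chart and fixes $q_0$. This is precisely the content of the Sternberg linearization machinery as adapted in \cite[Chapter 3]{nelson_flows}, \cite[Section 3]{guilleminschaeffer}, and \cite[Section 3.2]{hmv1}; I would check that their estimates apply verbatim to our vector field, whose linear part at $L$ has the nonresonant-by-inspection spectrum coming from the eigenvalues $a + |\alpha| - 1$, and invoke those results rather than re-deriving the estimates. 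The remaining bookkeeping — that $b_s \in \II^\infty$ so that $\psi_s$ fixes $q_0$ and is homogeneous, and that the composition of $\psi$ with the symplectomorphism from Lemma~\ref{lem:normalwitherror} is again of the required form — is routine.
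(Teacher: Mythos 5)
Your overall strategy (a Moser-type deformation $p_s = z + sr$ with a homological equation along the path) is a genuinely different route from the paper, which never sets up a homological equation at the principal-symbol level: instead it passes to the contact vector fields $X_p$ and $X_z$ on $S^*\R^n$ and conjugates one to the other by Nelson's scattering-map construction $G=\lim_{t\to\infty}U(-t)U_0(t)$ (Proposition~\ref{prop:linearization}, via Theorem~\ref{thm:nelson}, applied twice: first relative to the stable manifold $E^-_L$, then globally), using Theorem~\ref{thm:guilleminschaeffer} to arrange that the limiting diffeomorphism is a contact transformation. Your approach would, if completed, have the advantage that the transformation is by construction a flow of contact vector fields, so no contact-correction step is needed; but as written it has a genuine gap.

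The gap is in the homological equation itself. You propose to solve $\dpoiss{b_s,z}+c_s z=-r$ ``mod lower order,'' inverting $\dpoiss{\cdot,z}$ using the eigenvalue computation from Lemma~\ref{lem:normalwitherror}. That cannot work as stated: the error $r$ is already in $\II^\infty$, so any statement ``modulo terms flat at $L$'' (or modulo higher order in the $\II$-grading) is vacuous at this stage --- the entire content of Lemma~\ref{lem:principalnormal} is to remove an error that is exactly of this flat type. For the Moser scheme to close, the linearized equation along the path must be solved \emph{exactly}, and the operator to invert is $\dpoiss{\cdot,p_s}$ with $p_s=z+sr$, i.e.\ integration along the flow of the flatly perturbed, genuinely nonlinear contact vector field $X_{p_s}$, not along the model flow of $z\partial_z+\theta\cdot\partial_\theta$ (eigenvalue bookkeeping such as $a+|\alpha|-1$, and nonresonance, is beside the point here, since only the flat part of $b_s$ matters). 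Establishing convergence and smoothness up to $L$, with all derivatives and uniformly in $s$, of such integrals along a nonlinear flow is precisely the hard technical content, and the results you defer to do not apply ``verbatim'': Theorem~\ref{thm:gstechnical} (Guillemin--Schaeffer, Section 4, Theorem 4) handles transport equations only along \emph{linear} flows $U(t)$, and Nelson's Theorem~\ref{thm:nelson} produces a conjugating diffeomorphism by wave operators rather than solvability of your transport equation. So either you must prove a nonlinear analogue of Theorem~\ref{thm:gstechnical} (or run a fixed-point argument in the space of flat functions with quantitative bounds on the inverse of $\dpoiss{\cdot,z}$), or you should switch to the paper's route, where the needed estimates are exactly those already packaged in Nelson's theorem.
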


To prove this, it suffices to find a contact transformation for which
the pushforward of the contact vector field of $p$ is the contact
vector field of $z$. To do this, we make a small modification of
Nelson's proof of the Sternberg linearization theorem \cite[Chapter
2]{nelson_flows}. First, we ignore the contact structure, and simply prove a
statement about vector fields. The proof then carries over into the
contact setting, using a result of \cite{guilleminschaeffer}.

\begin{prop} \label{prop:linearization}
Let $X$ and $X_0$ be smooth vector fields on $\R^k$, such that $X_0:
\R^k \rightarrow \R^k$ (using the canonical trivialization of $T\R^k$) is linear and
\begin{equation} \label{infiniteordervanishing}
X = X_0 + o(\|z - L\|^\infty),
\end{equation}
where $L$ is the null space of $X_0$. Then there exists a local
diffeomorphism $\phi: (U, 0) \rightarrow (V, 0)$ (that is, sending $0$
to $0$ and a neighborhood $U$ of $0$ to a neighborhood $V$ of $0$) so that $\phi_* X = X_0$.
\end{prop}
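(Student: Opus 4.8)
The plan is to realize $\phi$ as an intertwining (``wave operator'') limit between the flows of $X$ and $X_0$, in the spirit of the flat-remainder step of Nelson's proof of Sternberg's linearization theorem \cite{nelson_flows}: because $R := X - X_0$ is flat on $L$, there is no formal (Taylor-series) obstruction, so no nonresonance hypothesis is needed. Observe first that both $X$ and $X_0$ vanish on $L$ (so both flows fix $L$ pointwise), and that, $R$ being flat on $L$, the field $X$ has the same linearization as $X_0$ at every point of $L$; in particular the normal dynamics of $L$ agree for $X$ and for $X_0$.

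Concretely, in the case that actually arises in Lemma~\ref{lem:principalnormal} --- where, after a linear change of coordinates on $\R^k$, one has $X_0 = z\partial_z + \theta\cdot\partial_\theta$ in the $(y,z,\theta)$ coordinates, so that $L = \{z=0,\theta=0\}$ is transversally a pure source --- I would set
$$\phi(x) = \lim_{t\to-\infty}\Phi^{X_0}_{-t}\bigl(\Phi^{X}_{t}(x)\bigr),$$
where $\Phi^{X}, \Phi^{X_0}$ are the flows. Writing $g(t) = \Phi^{X_0}_{-t}\circ\Phi^X_t(x)$ and using that $\Phi^{X_0}$ commutes with its generator, one computes $g'(t) = D\Phi^{X_0}_{-t}\,R(\Phi^X_t x)$, whose norm (together with all $x$-derivatives) is bounded, as $t\to-\infty$, by the exponential growth of $D\Phi^{X_0}_{-t}$ times the flat decay of $R$ near $L$ (since $\Phi^X_t(x)\to L$ geometrically); the latter wins, so $g'$ is integrable on $(-\infty,0]$ uniformly near $L$, and the limit exists as a smooth map on a neighbourhood of $L$, hence of $0$. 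The reparametrization identity $\phi\circ\Phi^X_s = \Phi^{X_0}_s\circ\phi$ follows directly from the definition, and differentiating it at $s=0$ gives $\phi_* X = X_0$; moreover $\phi(0)=0$ and $D\phi|_0 = \lim_t e^{-tA}e^{tA} = \Id$ (with $A$ the common linearization), so $\phi$ is a local diffeomorphism near $0$.

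The general case of the Proposition --- where the transverse linearization of $X_0$ is hyperbolic but not a pure source --- is where I expect the real work to lie, and it is precisely the ``small modification of Nelson's proof'' referred to: the single-sided limit above no longer converges, and one must split the directions transverse to $L$ into stable and unstable parts, straighten the corresponding invariant manifolds of $X$ (which are smooth and agree to infinite order with those of $X_0$, again by flatness of $R$), and assemble the conjugacy compatibly along them --- carrying the extra neutral directions $L$ through an argument Nelson runs with $L=\{0\}$. Finally, Lemma~\ref{lem:principalnormal} is deduced by applying Proposition~\ref{prop:linearization} to the rescaled contact vector field of $p$ against that of $z$ and transferring the resulting diffeomorphism back to a contact transformation via the contact/symplectic correspondence recalled from \cite{guilleminschaeffer}, absorbing the residual elliptic discrepancy into $e$.
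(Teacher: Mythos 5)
Your single-sided ``wave operator'' limit $\phi=\lim_{t\to-\infty}\Phi^{X_0}_{-t}\circ\Phi^{X}_{t}$ is essentially the construction underlying Nelson's Theorem (Theorem~\ref{thm:nelson}), and your convergence heuristic (flat decay of $R=X-X_0$ along the approach to $L$ beating the exponential growth of $D\Phi^{X_0}_{-t}$) is the right one; in the case where $L$ is transversally a pure source (which is indeed the case arising in Lemma~\ref{lem:principalnormal}, since the contact field of $z$ is $z\partial_z+\theta\cdot\partial_\theta$), this is a correct outline, modulo the cutoff/localization needed to make the estimates uniform, which the paper handles by modifying $X$ outside a small neighborhood.

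However, as a proof of Proposition~\ref{prop:linearization} as stated there is a genuine gap: the proposition allows a general linear $X_0$, with both contracting and expanding directions transverse to $L$, and for that case you explicitly defer the work (``where I expect the real work to lie'') and offer only a plan --- straighten the stable and unstable invariant manifolds of $X$ and ``assemble the conjugacy compatibly along them.'' That plan is not yet an argument: knowing the invariant manifolds agree with $E^{\pm}_L$ does not give you what the limit needs, namely that $X-X_0$ is flat \emph{as a vector field} along one of these subspaces, and ``assembling'' a conjugacy at points lying on neither manifold is precisely the difficulty. The paper resolves this with a two-step use of Theorem~\ref{thm:nelson}: first apply it to $-X_0-f(X-X_0)$ and $-X_0$ with $N=L$, $E=E^-_L$ (a time-reversed limit), producing a diffeomorphism after which $X=X_0$ to infinite order on a neighborhood of $0$ in $E^-_L$; only then is the hypothesis ``$X_1$ flat along $N$'' available with $N=E^-_L$, for which the basin $E$ is all of $\R^k$ (the stable component decays, the unstable component already lies in $E^-_L$), so a second application of the same limit gives the exact conjugacy near $0$. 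Your proposal contains the first-step limit in the source case but is missing this intermediate flattening-along-$E^-_L$ step and the second application, which is exactly the content needed to prove the proposition in the generality claimed (and note that both the statement and either proof tacitly require $\R^k=E^+_L\oplus E^-_L$, i.e.\ no nonzero purely imaginary spectrum of $X_0$).
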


\begin{rmk}
This statement is perhaps not the most natural one to make. A vector field $X$ on $\R^k$ which vanishes on a linear subspace $L$ induces a map $\II_L / \II_L^2$, i.e. there is a `linearization' $X_0: N^*L \rightarrow N^*L$, which can be identified with a vector field on $\R^k$. Assuming $X - X_0$ vanishes to infinite order on $L$, a local diffeomorphism should exist which pushes forward one vector field to the other. Proving this, however, would require a bit more work, and Proposition~\ref{prop:linearization} suffices for our purposes, since the contact vector field associated to $z$ is linear in our coordinates.
\end{rmk}

To prove this, we use this technical fact due to Nelson.

\begin{thm}[\cite{nelson_flows}, Chapter 3, Theorem 8] \label{thm:nelson}
Let $X$ be a $C^\infty$ vector field on $\R^k$ (thought, via the
canonical trivialization of $T\R^k$, as $X:\R^k \rightarrow \R^k$), with $X(0) = 0$, such
that each $d^jX$ satisfies a global Lipschitz condition (that is, for
each $j$, there is a $C_j$ so that for all $x, y \in \R^k$, $\|d^jX(x)
- d^jX(y)\| \leq C_j \|x - y\|$). Let $X_0 x = dX(0) x$, let $U(t)$ and $U_0(t)$ be the flows generated by $X$ and $X_0$, and define $X_1$ by $X = X_0 + X_1$. Suppose there is a linear subspace $N$, invariant under $X_0$, and a positive integer $l$ such that for all $m \geq 0$ and $j = 0, 1, 2, \ldots$ there is a $\delta > 0$ such that if $\|z - N\| \leq \delta$ then
$$\|d^j X_1(z)\| \leq \|z - N\|^m \|z\|^l.$$
Let $E$ be the linear subspace of all $x$ in $\R^k$ such that
$$\lim_{t \rightarrow \infty} \|U_0(t)x - N\| = 0.$$
Then for all $j = 0, 1, 2, \ldots$ and $x$ in $E$,
$$d^j(U(-t) U_0(t) x)$$
converges as $t \rightarrow \infty$ and the limit is continuous in $x$ for $x \in E$. Let
$$W_-(x) = \lim_{t \rightarrow \infty} U(-t) U_0(t) x$$
for $x \in E$. Then $W_-$ has a $C^\infty$ extension $G$ to $\R^k$ which is the identity to infinite order in a neighborhood of $0$ in $N$ and such that in a neighborhood of $0$ in $E$, $G^{-1}_* X = X_0$ to infinite order.
\end{thm}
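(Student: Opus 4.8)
The plan is to deduce Proposition~\ref{prop:linearization} from Nelson's Theorem~\ref{thm:nelson}, which is designed for exactly this kind of statement; the work is in verifying its hypotheses and in extracting an \emph{exact} (not merely infinite-order) conjugacy from its conclusion. Throughout I read \eqref{infiniteordervanishing} in the standard sense that $X - X_0$ and all of its derivatives vanish to infinite order on $L$.

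First I would pass to a globally controlled vector field. Theorem~\ref{thm:nelson} asks that $X$ be defined on all of $\R^k$ with each $d^j X$ globally Lipschitz, which a general smooth $X$ need not satisfy; but since we only want a local diffeomorphism near $0$, I would replace $X$ by $\tilde X := X_0 + \chi\,(X - X_0)$ for a cutoff $\chi \in C^\infty_c(\R^k)$ equal to $1$ near $0$. Then $\tilde X = X$ near $0$; $\tilde X(0) = 0$; $d\tilde X(0) = X_0$, since $X - X_0$ vanishes to first order on $L \ni 0$; and each $d^j \tilde X = d^j X_0 + d^j\big(\chi(X-X_0)\big)$ is globally Lipschitz, because $d^j X_0$ is constant for $j=1$ and zero for $j\geq 2$ while $\chi(X-X_0)$ is a compactly supported smooth function, so all of its derivatives are bounded and globally Lipschitz. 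In particular the flow of $\tilde X$ is complete. Set $\tilde X_1 := \tilde X - X_0 = \chi(X-X_0)$.

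Next I would check the remaining hypothesis of Theorem~\ref{thm:nelson} with $N := L$ and $l := 1$. The subspace $L$ is invariant under $X_0$, being its null space, and $d\tilde X(0) = X_0$ as required. Since $X - X_0$ and all its derivatives vanish to infinite order on $L$, on the compact set $\supp \chi$ (off which $\tilde X_1 \equiv 0$) we get, for each $j$ and $m$, a constant $C = C_{j,m}$ with $\| d^j \tilde X_1(z)\| \leq C\,\| z - L\|^{m+2}$. Using $\| z - L\| \leq \| z\|$, valid because $0 \in L$, and restricting to $\| z - L\| \leq \delta$ with $C\delta \leq 1$, this yields $\| d^j\tilde X_1(z)\| \leq \| z - L\|^m \| z\|$, which is the hypothesis of Theorem~\ref{thm:nelson}.

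Finally I would apply the theorem and upgrade its conclusion. It produces $E = \{x : U_0(t)x \to L \text{ as } t \to \infty\}$, the map $W_-(x) = \lim_{t\to\infty} U(-t)U_0(t)x$ on $E$, and a $C^\infty$ extension $G : \R^k \to \R^k$ of $W_-$ which is the identity to infinite order near $0$ in $L$ and satisfies $G^{-1}_* \tilde X = X_0$ to infinite order near $0$ in $E$. The point is that when $E = \R^k$ this becomes an exact identity near $0$: then $G = W_-$ on a neighborhood of $0$, so $W_-$ is smooth there; $G(0) = 0$ and, linearizing the flows at the fixed point $0$, $dW_-(0) = \lim_{t\to\infty} e^{-tX_0}e^{tX_0} = \Id$, so $G$ and hence $\phi := G^{-1}$ is a local diffeomorphism fixing $0$; and the flow-intertwining $W_- \circ U_0(s) = U(s) \circ W_-$, which is immediate from the definition of $W_-$, differentiates at $s=0$ to $(W_-)_* X_0 = \tilde X$, i.e. $\phi_* \tilde X = X_0$ near $0$. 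Since $\tilde X = X$ there, $\phi$ is the desired map. To arrange $E = \R^k$, replace $(X, X_0)$ by $(-X, -X_0)$ if necessary — harmless, as $\phi_*(-X) = -X_0$ is equivalent to $\phi_* X = X_0$; this makes $E = \R^k$ whenever $X_0$ is, transverse to $L$, a contraction, which holds in the situation of Lemma~\ref{lem:principalnormal}, where the linearized contact vector field of $z$ is $z\partial_z + \theta \cdot \partial_\theta$ and, after the flip, $e^{-tX_0}(z,\theta,y) = (e^{-t}z, e^{-t}\theta, y) \to (0,0,y) \in L$. For a fully general linear $X_0$ one would split $\R^k$ into the stable and unstable subspaces of $X_0$ together with $L$ and run this argument in forward and backward time; this is not needed here. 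The main obstacle is exactly this last step: arranging $E = \R^k$ — where the structure of $X_0$ enters — and then checking carefully that the extension $G$ agrees with $W_-$, is a genuine local diffeomorphism, and intertwines the two flows on the nose, so that Nelson's ``to infinite order'' can be replaced by an exact conjugacy.
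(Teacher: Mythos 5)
The statement you were asked to prove is Theorem~\ref{thm:nelson} itself --- Nelson's technical linearization theorem, which the paper quotes from \cite{nelson_flows} (Chapter 3, Theorem 8) without proof. Your argument does not prove this statement: it takes Theorem~\ref{thm:nelson} as a black box and uses it to deduce Proposition~\ref{prop:linearization}. As a proof of the theorem this is circular, and all of the actual content is missing: the convergence of $U(-t)U_0(t)x$ and of all derivatives $d^j(U(-t)U_0(t)x)$ as $t \rightarrow \infty$ for $x \in E$, which requires quantitative estimates on the flows $U(t)$ and $U_0(t)$ exploiting the hypothesis $\|d^j X_1(z)\| \leq \|z - N\|^m \|z\|^l$ near $N$ together with the global Lipschitz bounds; the continuity of the limit in $x$; and the construction of a $C^\infty$ extension $G$ of $W_-$ from $E$ to all of $\R^k$ which is the identity to infinite order along $N$ near $0$ and conjugates $X$ to $X_0$ to infinite order near $0$ in $E$ (a Whitney/Borel-type extension of the jet of $W_-$ off $E$). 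None of these steps appears in your write-up.

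What you have written is instead a proof of Proposition~\ref{prop:linearization}, and as such it is close in spirit to the paper's own: the paper likewise cuts off $X - X_0$ to meet the global hypotheses and verifies the decay condition with $N = L$. The substantive difference is that the paper applies Theorem~\ref{thm:nelson} twice --- first with $E = E^-_L$ to arrange $X = X_0$ to infinite order on the stable subspace, then, after shrinking the cutoff, with $N = E^-_L$ and $E = \R^k$ --- so that a general linear $X_0$ with both stable and unstable directions transverse to $L$ is handled; your single application plus a time reversal covers only the case where $X_0$ is transversally a contraction or expansion, which suffices for the vector field $X_z$ in Lemma~\ref{lem:principalnormal} but not for Proposition~\ref{prop:linearization} as stated. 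In any case, to address the statement actually in question you would need to prove Nelson's theorem, not apply it.
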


\begin{proof}[Proof of Proposition~\ref{prop:linearization}]
As in the statement of Theorem~\ref{thm:nelson}, let $U_0(t)$ be the flow of $X$ and $X_0$, respectively.
Let $E^-_L$ and $E^+_L$ be the stable and unstable subspaces for $L$ with respect to $X_0$, that is,
$$E^\pm_L = \{x \in \R^k \ | \ \lim_{t \rightarrow \pm \infty} U_0(t) x \in L\}.$$
We have $\R^k = E^+_L \oplus E^-_L$.

Let $f \in C^\infty_c(\R^k)$ be a compactly supported cutoff function, equal to $1$ in a neighborhood of $0$. Then $-X_0 - f(X - X_0), -X_0,$ and $N = L$ satisfy the hypotheses of Theorem~\ref{thm:nelson} with $E = E^-_L$. Thus there exists a local diffeomorphism $G$ at $0$, which is an extension of $\lim_{t \rightarrow \infty} U(-t) U_0(t)$ from $E^-_L$ to all of $\R^k$, so that $(G^{-1})_* X = X_0$ to infinite order in a neighborhood of $0$ in $E^-_L$.

Thus we can assume that $X = X_0$ to infinite order in a neighborhood $U \subset E^-_L$ of $0$. Let $f$ be as above, but with its support small enough so that its intersection with $E^-_L$ is contained in $U$. Let $\tilde X = X_0 + f(X - X_0)$. Then $\tilde X - X_0$ vanishes to infinite order on $E^-_L$, so $\tilde X, X_0,$ and $N = E^-_L$ satisfy the hypotheses of Theorem~\ref{thm:nelson} with $E = \R^k$. Thus $G = \lim_{t \rightarrow \infty} U(-t) U_0(t)$ exists and is smooth on $\R^k$, and $G^{-1}_* X = X_0$.
\end{proof}

In order to extend this proof to the contact setting, we use the following theorem of Guillemin and Schaeffer:

\begin{thm}[\cite{guilleminschaeffer}, Section 3, Theorem 3] \label{thm:guilleminschaeffer}
Let $X$ be a contact manifold, $Y$ a closed submanifold of $X$ and $\phi_i$ a sequence of contact transformations such that the $k$ jets $j^k \phi_i$ converge to continuous limits on $Y$. Then there exists a neighborhood $U$ of $Y$ and a contact transformation $\phi: U \rightarrow X$ such that $j^k \phi|_Y = \lim j^k \phi_i |_Y$, for all $k$.
\end{thm}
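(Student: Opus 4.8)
The plan is to realize the prescribed limiting jets by first Borel-summing them into a smooth map defined near $Y$, and then correcting that map to a genuine contact transformation by a Moser-type deformation concentrated so close to $Y$ that it is the identity to infinite order along $Y$. Everything is local near $Y$, so I would fix a local contact form $\alpha$ (cooriented near $Y$). Because each $\phi_i$ is a contactomorphism, $\phi_i^*\alpha = \lambda_i\,\alpha$ with $\lambda_i = (\phi_i^*\alpha)(R)>0$, where $R$ is the Reeb field of $\alpha$; for every $k$ the jet $j^k\lambda_i|_Y$ is a fixed universal polynomial in $j^{k+1}\phi_i|_Y$ and the jets of $\alpha$, so the hypothesis that all $j^m\phi_i|_Y$ converge forces all $j^k\lambda_i|_Y$ to converge. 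Write $P=\lim_i j^\infty\phi_i|_Y$ and $\Lambda=\lim_i j^\infty\lambda_i|_Y$ for the limiting formal (i.e.\ $\infty$-jet along $Y$) data. Since pullback of a form is a universal polynomial operation on jets, the identities $\phi_i^*\alpha=\lambda_i\alpha$ pass to the limit: $P^*\alpha=\Lambda\,\alpha$ as $\infty$-jets along $Y$, with $\Lambda|_Y>0$. In particular $P$ is a formal contact transformation along $Y$ (and a formal diffeomorphism, using that the limiting linearization along $Y$ is invertible, which is what makes the conclusion meaningful).

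Next I would apply Borel's lemma / Whitney extension along the closed submanifold $Y$ to get a smooth map $\psi$ on a neighborhood of $Y$ with $j^\infty\psi|_Y=P$; shrinking the neighborhood, this $\psi$ is a diffeomorphism onto an open set. I would also Borel-extend $\Lambda$ to a smooth positive function $g$ near $Y$. Then $\beta:=\psi^*\alpha$ is a contact form near $Y$ with $j^\infty\beta|_Y=P^*\alpha=\Lambda\alpha=j^\infty(g\alpha)|_Y$, so $\beta-g\alpha$ vanishes to infinite order along $Y$. Since only the contact structure $\ker\alpha=\ker(g\alpha)$ matters, it now suffices to find a diffeomorphism $\chi$ near $Y$, equal to the identity to infinite order along $Y$, with $\chi^*\beta$ a positive multiple of $\alpha$; then $\phi:=\psi\circ\chi$ is a contact transformation on a neighborhood of $Y$ and $j^k\phi|_Y=j^k\psi|_Y=\lim_i j^k\phi_i|_Y$ for all $k$, as required.

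To produce $\chi$ I would run Moser's method on the path of contact forms $\alpha_t=(1-t)\,g\alpha+t\,\beta$, $t\in[0,1]$. Near $Y$ one has $\alpha_t=g\alpha+O(|Y|^\infty)$, so on a small enough neighborhood of $Y$ each $\alpha_t$ is contact, while $\dot\alpha_t=\beta-g\alpha$ vanishes to infinite order along $Y$. Solving $\iota_{X_t}d\alpha_t=\nu_t\alpha_t-\dot\alpha_t$ with $X_t\in\ker\alpha_t$ and $\nu_t=\dot\alpha_t(R_t)$ ($R_t$ the Reeb field of $\alpha_t$) — which is possible because $\nu_t\alpha_t-\dot\alpha_t$ annihilates $R_t$ and $d\alpha_t|_{\ker\alpha_t}$ is nondegenerate — yields a time-dependent vector field $X_t$ that, like $\dot\alpha_t$, vanishes to infinite order along $Y$. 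Hence its flow $\psi_t$ is defined on a neighborhood of $Y$ for all $t\in[0,1]$, fixes $Y$, is the identity to infinite order along $Y$, and satisfies $\psi_t^*\alpha_t=\mu_t\,(g\alpha)$ for positive functions $\mu_t$. Taking $\chi=\psi_1$ gives $\chi^*\beta=\mu_1\,g\alpha$, a positive multiple of $\alpha$.

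I expect the only substantive point to be the relative/infinite-order bookkeeping in this last step: one must verify that the Moser vector field really does vanish to infinite order along $Y$, which is exactly what prevents the correction $\chi$ from disturbing the prescribed jets, and this rests on the opening observation that the limiting formal map is a formal contactomorphism — i.e.\ that the jet identities $\phi_i^*\alpha=\lambda_i\alpha$ survive in the limit together with convergence of the jets of the conformal factors $\lambda_i$. A more computational alternative, presumably closer to the original argument, would instead encode each $\phi_i$ near a point by a generating function, note that jet convergence of the $\phi_i$ translates into jet convergence of the generating functions, and Borel-sum the generating function directly; this needs a preliminary composition with a fixed contact transformation to arrange the transversality that the generating-function picture requires, whereas the Moser argument above avoids that case distinction.
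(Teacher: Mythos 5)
The paper does not prove this statement: it is quoted verbatim from Guillemin and Schaeffer and used as a black box in the proof of Lemma~\ref{lem:principalnormal}, so there is no in-paper argument to measure your proposal against. Judged on its own, your proof is essentially correct. The two main steps fit together cleanly: Borel-summing the limiting $\infty$-jet into a smooth (diffeomorphic) map $\psi$ and noting that $\psi^*\alpha$ agrees with a conformal rescaling $g\alpha$ to infinite order along $Y$; then running a contact Moser (Gray stability) deformation on $\alpha_t=(1-t)g\alpha+t\,\psi^*\alpha$ whose generating field $X_t\in\ker\alpha_t$, solving $\iota_{X_t}d\alpha_t=\nu_t\alpha_t-\dot\alpha_t$ with $\nu_t=\dot\alpha_t(R_t)$, inherits the infinite-order vanishing along $Y$ from $\dot\alpha_t$. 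The fact that the time-$1$ flow of such a field is the identity to infinite order along $Y$ is a routine induction on derivatives, so the correction $\chi$ does not disturb the prescribed jets.

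A few points you leave tacit deserve a line in a careful write-up, though none is a real obstruction. The Borel/Whitney extension along $Y$ needs the limiting jets to be smooth sections of the jet bundles satisfying the Whitney compatibility conditions, which follows from locally uniform convergence of the $j^k\phi_i$ rather than the bare pointwise convergence to continuous limits that the statement literally asserts; invertibility of the limiting $1$-jet (hence $\Lambda|_Y>0$ and $\psi$ a diffeomorphism) and coorientability near $Y$ are implicit hypotheses; and for non-compact $Y$ one needs a locally finite patching argument to run the Moser flow to time $1$ on a (possibly narrowing) tube around $Y$. As for the comparison you yourself raise: the original Guillemin--Schaeffer argument is, in their style, a generating-function (Jacobi) construction done after a preliminary normalization to make the graph transversal to the relevant fibration, whereas your Gray-stability route avoids that case analysis entirely and localizes the error to infinite-order vanishing along $Y$ from the start. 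The Moser route is the more economical one and is certainly adequate for the application in this paper.
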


\begin{proof}[Proof of Lemma~\ref{lem:principalnormal}]
By Lemma~\ref{lem:normalwitherror}, we can assume that $p - z$
vanishes to infinite order on $L = \overline{\Lambda}
\backslash \partial \Lambda$. As contact transformations on the
cosphere bundle correspond to homogeneous symplectomorphisms on $T^*X
\backslash o$, we exhibit a contact transformation, with $p$ and $z$
considered to be local functions on the cosphere bundle. Let $\chi \in
C^\infty_c(\R^{2n-1})$ be a compactly supported cutoff function,
identically $1$ in a neighborhood of $q_\infty = 0$. We can then replace $p$ by $z + \chi(p - z)$, as these functions agree locally around $0$, so the same contact transformation works for both. Let $X_p = H_{\rho^{-1} p}|_{\{\rho = 0\}}$ and $X_z = H_{\rho^{-1} z}|_{\{\rho = 0\}}$ denote the corresponding contact vector fields;  it suffices to find a contact transformation $G$ so that $G^{-1}_* X_p = X_z$.

The proof of Proposition~\ref{prop:linearization} achieves this, with two changes. First, instead of defining new vector fields with cutoffs, we simply control the support of $\chi$. For each application of Theorem~\ref{thm:nelson}, we can apply the theorem directly to $X = X_p$ and $X_0 = X_z$, so $U(t)$ and $U_0(t)$ are contact transformations for each $t$. For the first application of Theorem~\ref{thm:nelson}, Theorem~\ref{thm:guilleminschaeffer} then assures us that we can choose the diffeomorphism to be a contact transformation. For the second application of Theorem~\ref{thm:nelson}, the diffeomorphism constructed is automatically a contact transformation.
\end{proof}

\subsection{Lower-order terms argument}

In this section, we deal with all lower order terms in the homogeneous
expansion for the full symbol of $P$, completing the proof of
Theorem~\ref{thm:normalform}. As mentioned above, this argument is
essentially a verification that the methods of \cite[Section
4]{guilleminschaeffer} carry over to our setting. The argument will
again involve solving away lower order terms up to an error which
vanishes to infinite order on the Lagrangian submanifold of radial points $\Lambda$, and then getting rid of this error term. We begin this section with a technical lemma to be used at the end of the proof, which is our analogue of \cite[Section 4, Theorem 2]{guilleminschaeffer}.

\begin{lem} \label{lem:vanishingerror}
Let $V$ be a linear vector field on $\R^k$ vanishing on a subspace $L$, and let $c \in \R$. Then given $g \in C_c^\infty(\R^k)$ vanishing to infinite order at $L$, there exists a function $f \in C^\infty(\R^k)$ vanishing to infinite order at $L$ such that
$$Vf + c f = g$$
everywhere.
\end{lem}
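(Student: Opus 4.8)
The plan is to solve this transport equation by integrating $g$ along the flow of $V$. Write $\Phi_t = \exp(tV)$ for this flow; since $V$ is linear, $\Phi_t = e^{tA}$ for the matrix $A$ of $V$, and $\Phi_t$ fixes $L$ pointwise. Duhamel's identity gives
$$\frac{d}{dt}\Bigl(e^{ct}f(\Phi_t x)\Bigr) = e^{ct}(Vf + cf)(\Phi_t x),$$
so, reading off the orientation from the dynamics near $L$ — we treat the case relevant here, in which $\R^k = L \oplus M$ with $M$ invariant under $A$ and $\operatorname{Re}\lambda > 0$ for each eigenvalue $\lambda$ of $A|_M$, i.e.\ $L$ is a source and $V$ is hyperbolic transverse to it, the sink case being the same after reversing time — I would take as candidate solution
$$f(x) := \int_{-\infty}^0 e^{ct}\,g(\Phi_t x)\,dt.$$
That this $f$ satisfies $Vf + cf = g$ everywhere is then a one-line computation identical in form to the one in the proof of Lemma~\ref{lem:normalwitherror}: using $\Phi_{t+s} = \Phi_t\Phi_s$,
$$(Vf)(x) = \frac{d}{ds}\Big|_{s=0}f(\Phi_s x) = \frac{d}{ds}\Big|_{s=0}\Bigl(e^{-cs}\int_{-\infty}^s e^{cu}g(\Phi_u x)\,du\Bigr) = -c\,f(x) + g(x).$$

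The substance of the argument is the single estimate that makes this integral — and all its $x$-derivatives, so that $f \in C^\infty(\R^k)$ — converge, while at the same time exhibiting the infinite-order vanishing of $f$ at $L$. Let $P_M$ be the projection onto $M$ along $L$; since $L$ and $M$ are both $A$-invariant, $P_M$ commutes with $\Phi_t$, so $\operatorname{dist}(w,L) \asymp \|P_M w\|$ and
$$\operatorname{dist}(\Phi_t x, L) = \|\Phi_t P_M x\| \le C e^{-\alpha|t|}\operatorname{dist}(x,L) \qquad (t \le 0),$$
for any $\alpha > 0$ below the spectral gap of $A|_M$, while $\|\Phi_t\| \le C$ for $t \le 0$. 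On the other hand, since $g \in C_c^\infty$ vanishes to infinite order at $L$, for every $N$ and every multi-index $\beta$ there is $C_{N,\beta}$ with $|\partial^\beta g(w)| \le C_{N,\beta}\operatorname{dist}(w,L)^N$ for all $w$. Feeding the first bounds into the second and differentiating under the integral, one gets $e^{ct}\bigl|\partial_x^\beta(g(\Phi_t x))\bigr| \le C'_{N,\beta}\,e^{(|c|-\alpha N)|t|}\operatorname{dist}(x,L)^N$ for $t \le 0$; choosing $N$ with $\alpha N > |c|$ makes the $t$-integral converge absolutely and locally uniformly, and yields $|\partial_x^\beta f(x)| \le C''_{N,\beta}\operatorname{dist}(x,L)^N$. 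As $N$ is arbitrary, this says exactly that $f$ and all its derivatives vanish to infinite order at $L$, as required.

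I expect the only genuine work to be the bookkeeping in that last estimate — controlling the normal-contraction factor $e^{-\alpha|t|}$ uniformly in $x$ and through differentiation. The one conceptual point worth isolating is that the infinite-order vanishing of $g$ at $L$ dominates the factor $e^{ct}$ for \emph{every} real $c$, which is why, in contrast to the nonresonance phenomena elsewhere in the paper, the conclusion holds for all $c \in \R$ with no condition on $c$. A fully general linear $V$ vanishing on an arbitrary subspace would in addition require handling directions transverse to $L$ on which $A$ has purely imaginary spectrum, where obstructions to solvability can genuinely occur; but the field to which this lemma is applied is the linear field whose fixed-point set is exactly $L$ and which is normally hyperbolic there, so the argument above applies directly.
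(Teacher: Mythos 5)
There is a genuine gap, and it lies precisely in the sentence where you dismiss the general case. You restrict to the situation in which $\R^k = L \oplus M$ with all eigenvalues of $A|_M$ having positive real part (``$L$ is a source'') or all negative (``$L$ is a sink''), and assert that this is the case relevant to the paper because the application is normally hyperbolic. But normal hyperbolicity (no purely imaginary spectrum transverse to $L$) is strictly weaker than being a source or sink, and the vector field to which Lemma~\ref{lem:vanishingerror} is actually applied is $V = \theta\cdot\partial_\theta - z\partial_z$ with $L = \{\theta = 0, z = 0\}$. This has eigenvalues $+1$ in the $\theta$ directions and $-1$ in the $z$ direction, so $L$ is a \emph{saddle} transverse to itself. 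Neither the backward integral you wrote nor the forward one works there. A short computation makes the failure concrete: restrict to $\theta = 0$ and small $z > 0$, so $\Phi_t(y,z,0) = (y, e^{-t}z, 0)$; substituting $s = e^{-t}z$ in your candidate $f(x) = \int_{-\infty}^0 e^{ct} g(\Phi_t x)\,dt$ gives $f(y,z,0) = z^c\int_z^\infty s^{-c-1} g(y,s,0)\,ds = z^c\bigl(C(y) + O(z^\infty)\bigr)$ with $C(y) = \int_0^\infty s^{-c-1}g(y,s,0)\,ds$ generally nonzero. That vanishes only to finite order $c$ at $L$, not to infinite order (and for the forward integral the analogous computation along $z=0$ gives $\theta^{-c}$, which can even blow up).

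The paper handles exactly this by a two-step argument: set $E_L^\pm = \{x : \lim_{t\to\pm\infty}\Phi_t x \in L\}$; first apply the (Guillemin--Schaeffer) integral construction with the \emph{time-reversed} flow and $c$ replaced by $-c$, with convergence guaranteed only on $E_L^-$, extend this $\tilde f$ smoothly off $E_L^-$, and observe that $g + V\tilde f + c\tilde f$ now vanishes to infinite order on $E_L^-$ (not just on $L$). Having reduced to $g$ vanishing to infinite order on $E_L^-$, the forward integral then converges on all of $\R^k$ and produces the desired $f$, because the escape of $\Phi_t x$ along $E_L^-$ for $t\to+\infty$ is now compensated by the infinite-order vanishing of $g$ there. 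Your one-integral argument omits this reduction, and the estimate you sketch does not close in the saddle case. So your approach is essentially the same kernel-formula idea the paper uses, but your proposal is missing the stable/unstable decomposition and the first, ``reduction to $g$ flat on $E_L^-$'' step, which is precisely where the work is for the saddle geometry that the paper actually needs.
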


To prove this, we use the following theorem of Guillemin and Schaeffer.

\begin{thm}[\cite{guilleminschaeffer}, Section 4, Theorem 4] \label{thm:gstechnical}
Let $U(t)$ be a group of linear transformations acting on $\R^k$. Let $N$ be a subspace of $\R^k$ invariant under $U(t)$ and let $E$ be the subspace of $\R^k$ consisting of all $x \in \R^k$ such that
$$\|U(t) x - N\| \rightarrow 0$$
as $t \rightarrow \infty$. Let $g$ be a compactly supported function on $\R^k$ which vanishes to infinite order along $N$. Set
$$f(x, s) = - \int^s_0 e^{c t} g(U(t) x) \ dt.$$
Then for all multiindices $\beta$, $\lim_{s \rightarrow \infty} D^\beta f(x, s)$ converges absolutely for all $x \in E$ and is a smooth function of $x$. Moreover this limit vanishes to infinite order on $N$.
\end{thm}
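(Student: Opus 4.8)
The plan is to reduce everything to a single family of exponential estimates for the integrand $e^{ct}g(U(t)x)$ and its $x$-derivatives, valid uniformly for $x$ in compact subsets of $E$. Since $U(t)$ is a one-parameter group of linear maps (in the application of Lemma~\ref{lem:vanishingerror} it is the flow of a linear vector field), I would write $U(t) = e^{tA}$ for a fixed matrix $A$. The two inputs are: the hypothesis that $g$ vanishes to infinite order along $N$, which produces an arbitrarily high power of $\mathrm{dist}(\cdot,N)$; and the definition of $E$, which forces $\mathrm{dist}(U(t)x,N)$ to decay exponentially as $t\to+\infty$ for $x\in E$. Playing the decay off against the (at most exponential) growth of $U(t)$ and of $e^{ct}$ yields an integrable bound, and all three assertions — absolute convergence of $\lim_{s\to\infty}D^\beta f(x,s)$, smoothness of the limit on $E$, and infinite-order vanishing on $N$ — drop out of it.

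First I would record two basic estimates. (i) Because $g\in C_c^\infty(\R^k)$ and each $\partial^\gamma g$ vanishes on $N$ together with all of its derivatives, Taylor's theorem at the nearest point of $N$, combined with the compactness of $\mathrm{supp}\, g$, gives: for every $m$ and every multiindex $\gamma$ there is $C_{m,\gamma}$ with $|\partial^\gamma g(w)|\le C_{m,\gamma}\,\mathrm{dist}(w,N)^m$ for all $w\in\R^k$. (ii) Since $N$ is invariant, $A$ induces an operator $\bar A$ on $\R^k/N$, and $\mathrm{dist}(U(t)x,N)$ equals the quotient norm of $e^{t\bar A}\pi(x)$, where $\pi:\R^k\to\R^k/N$ is the projection. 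The condition $\|U(t)x-N\|\to0$ is exactly $e^{t\bar A}\pi(x)\to0$, so $\pi(E)$ is the stable subspace of $\bar A$ (the sum of generalized eigenspaces for eigenvalues of strictly negative real part) and $E=\pi^{-1}(\text{that subspace})$; in particular $E$ is a subspace and $N\subseteq E$. Choosing $\delta>0$ smaller than the minimum of $-\mathrm{Re}\,\lambda$ over the relevant eigenvalues, one gets for each compact $K\subset E$ a constant $C_K$ with $\mathrm{dist}(U(t)x,N)\le C_K e^{-\delta t}$ for all $t\ge0$, $x\in K$; similarly $\|U(t)\|\le C(1+t)^{k}e^{\mu t}$ with $\mu=\max_\lambda \mathrm{Re}\,\lambda$.

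Then I would combine these. By the chain rule, $\partial^\beta_x\big(g(U(t)x)\big)$ is a finite sum of terms $(\partial^\gamma g)(U(t)x)$ times products of at most $|\beta|$ entries of $U(t)$, with $|\gamma|\le|\beta|$; hence, for $x$ in a compact $K\subset E$,
\[
\big|e^{ct}\,\partial^\beta_x\big(g(U(t)x)\big)\big| \le C_{m,\beta}\,(1+t)^{k|\beta|}\,e^{(|c|+|\beta|\mu)t}\,\mathrm{dist}(U(t)x,N)^m \le C'_{m,\beta}\,(1+t)^{k|\beta|}\,e^{(|c|+|\beta|\mu-m\delta)t}
\]
for all $t\ge0$. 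Choosing $m$ with $m\delta>|c|+|\beta|\mu+1$ makes this bounded by $Ce^{-t}$, so $\int_0^\infty e^{ct}\,\partial^\beta_x(g\circ U(t))(x)\,dt$ converges absolutely, uniformly for $x\in K$. Differentiating under the (finite) integral, $D^\beta f(x,s)=-\int_0^s e^{ct}\,\partial^\beta_x(g\circ U(t))(x)\,dt$, which therefore converges as $s\to\infty$, for every $x\in E$ and every $\beta$, to $F_\beta(x):=-\int_0^\infty e^{ct}\,\partial^\beta_x(g\circ U(t))(x)\,dt$. Since $D^\beta f(\cdot,s)$ converges uniformly on compact subsets of $E$ for every $\beta$, the standard theorem on interchanging limits and derivatives shows $F_0$ is $C^\infty$ on $E$ with $D^\beta F_0=F_\beta$; this is the asserted smoothness of the limit. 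Finally, for $x\in N$ we have $U(t)x\in N$, so $(\partial^\gamma g)(U(t)x)=0$ by (i), whence the integrand defining $F_\beta$ vanishes identically in $t$ and $D^\beta F_0(x)=F_\beta(x)=0$; as $\beta$ is arbitrary, the limit vanishes to infinite order on $N$.

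The computation is essentially routine; the one place that needs care is step (ii) — identifying $E$ with $\pi^{-1}$ of the stable subspace of the operator induced on $\R^k/N$ and extracting a uniform exponential rate $\delta>0$ with a genuine spectral gap, while checking that the Jordan-block polynomial factors in both $\|U(t)\|$ and $\mathrm{dist}(U(t)x,N)$ are harmless once $m$ is taken large enough. Everything else is bookkeeping with the chain rule and uniform convergence.
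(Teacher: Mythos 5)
This statement is imported: the paper quotes it from \cite{guilleminschaeffer} (Section 4, Theorem 4) and gives no proof of its own, so there is no in-paper argument to compare against; your proposal therefore has to stand on its own, and it does — it is correct and supplies the expected mechanism. Passing to the induced operator on $\R^k/N$, identifying $E$ as $\pi^{-1}$ of the stable subspace of $\bar A$ (note the center-spectrum part contributes nothing to $E$, as you implicitly use), extracting a uniform rate $\delta>0$ below the spectral gap, and then beating $e^{ct}$ together with the at-most-exponential chain-rule factors by taking the vanishing order $m$ large is exactly how such statements are proved; the Taylor estimate $|\partial^\gamma g(w)|\le C_{m,\gamma}\,\mathrm{dist}(w,N)^m$ needs only compact support plus infinite-order vanishing, and the Jordan-block polynomial factors are harmless once $m\delta>|c|+|\beta|\mu+1$, as you say. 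One point of wording to tighten: since $E$ may be a proper subspace of $\R^k$, the conclusion ``the limit is a smooth function of $x$ on $E$'' should be read as smoothness along $E$, applied to each limit $F_\beta$ separately (each $D^\beta f(\cdot,s)$ converges uniformly on compacts of $E$ together with all its derivatives, so each $F_\beta$ is smooth along $E$); your identity $D^\beta F_0=F_\beta$ only literally makes sense for derivatives in directions tangent to $E$, since $F_0$ is not defined off $E$. This is a cosmetic issue, not a gap: the data actually needed downstream in Lemma~\ref{lem:vanishingerror} is precisely the family $\{F_\beta\}$ on $E$, which is then fed into a Whitney-type extension, and your argument produces exactly that.
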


\begin{proof}[Proof of Lemma~\ref{lem:vanishingerror}]
As in the proof of Proposition~\ref{prop:linearization}, we let
$$E_L^\pm = \{x \in \R^k \ | \ \lim_{t \rightarrow \pm \infty} U_0(t) x \in L\}.$$
We have $\R^k = E_L^+ \oplus E_L^-$. Apply Theorem~\ref{thm:gstechnical} to $N = L$, $c$ replaced by $-c$, and $U(t)$ the time $t$ flow of $-V$, with $E = E_L^-$. For each multiindex $\beta$ and $x \in E_L^-$, let $f^\alpha(x) = \lim_{t \rightarrow \infty} D^\alpha f(x, s)$. We can construct $\tilde f$, defined on $\R^k$, so that $D^\alpha \tilde f(x) = f^\alpha(x)$ for all $x \in E_L^-$. For all $x \in E$, we have the following identity, for $x \in E$.
\begin{align*}
f(U(s) x) & = - \int^\infty_0 e^{-ct} g(U(t+s) x) \ dt \\
& = - e^{cs} \int^\infty_0 e^{-c(t + s)} g(U(t + s) x) \ dt \\
& = - e^{cs} \int^\infty_s e^{-c t} g(U(t) x) \ dt
\end{align*}
Differentiating both sides with respect to $s$ and setting $s$ equal to $0$, we obtain
$$- V \tilde f(x) =  c \tilde f(x) + g$$
on $E_L^-$. Repeating this argument with all partial derivatives of $\tilde f$, we see that $g + V \tilde f + c f$ vanishes to infinite order at $E_L^-$. We have thus reduced the proof to the case where $g$ vanishes on $E_L^-$. We can then apply Theorem~\ref{thm:gstechnical} with $N = L$, $U(t)$ the time $t$ flow of $V$, and $E = \R^k$ (this time, we can keep $c$ as is instead of replacing it by $-c$). By the same argument as above, $Vf + cf = g$ on $\R^k$.
\end{proof}

We now complete the proof of Theorem~\ref{thm:normalform}.

\begin{proof}[Proof of Theorem~\ref{thm:normalform}]

Lemma~\ref{lem:principalnormal} allows us to assume that $P - P' \in
\Psi^0(\R^n)$, where $\WF'(P')$ is contained in a  conic neighborhood of
$q_0$, and $P' = zD_z$ microlocally on a smaller  conic neighborhood of
$q_0$. Recall that we have fixed $q_0$ by \eqref{fixq0}, and that we
have fixed $\Lambda$ by \eqref{fixLambda}.

Let $p_0$ be the homogeneous representative of $\sigma_0(P - P')$
(here we use for the first time that $P$ has a one-step
expansion). For the remainder of the proof, we assume that $p_0$
vanishes on $\Lambda$, and we show that such a $P$ is microlocally
equivalent to $P'$, which implies the general statement and hence
completes the proof of the theorem.

To deal with lower-order terms, we successively conjugate $P$ by
pseudodifferential operators and multiply $P$ by elliptic factors so
that the resulting operator is $P'$ up to successively lower orders
within a fixed neighborhood $U$ of $q_0$. We choose these operators
acting on $P$ to be successively lower order perturbations of the
identity operator so that the series of correcting terms can be
asymptotically summed to yield $E, A \in \Psi^0(\R^n)$, elliptic at
$q_0$, so that 
\begin{equation} \label{lowerorderconj}
q_0 \notin \WF'(E A P A^{-1} - P').
\end{equation}
 Here $A^{-1}$ is a microlocal parametrix of $A$ at $q_0$. It is important that our
successive corrections are valid in a fixed neighborhood $U$ so that
asymptotic summation is possible. As above, all these operators are
classical, i.e., have 1-step expansions.

We proceed to construct $A_0 \in \Psi^0(\R^n)$, elliptic in $U$, and
$F_0 \in \Psi^{-1}(\R^n)$ so that $P^{(0)} = (\Id + F_0) A_0 P
A_0^{-1}$ is such that $P^{(0)} - P'$ is of order $-1$ in $U$
($A_0^{-1}$ is a parametrix of $A_0$ in $U$). We then
inductively assume, for $k \geq 0$, that $P^{(k)} - P'$ is of order
$-k-1$ in $U$ and construct $B_{k
  + 1} \in \Psi_{cl}^{-k-1}(\R^n)$ and $F_{k+1} \in
\Psi_{cl}^{-k-2}(\R^n)$ so that 
$$P^{(k+1)} = (\Id + F_{k+1})(\Id + iB_{k+1}) P^{(k)}(\Id + i
B_{k+1})^{-1}$$
is such that $P^{(k+1)} - P'$ is of order $-k - 2$ in $U$ ($(\Id + i
B_{k+1})^{-1}$ is a parametrix for $\Id + i B_{k+1}$ in $U$). Assuming we can do this, asymptotic
summation completes the proof. We can define
$$A = \biggl(\prod^\infty_{k = 1} (\Id + i B_k) \biggr) A_0,$$
where the infinite product is successively on the left, i.e.
$$\ldots (\Id + i B_3)(\Id + i B_{2}) (\Id + i B_1),$$
via asymptotic summation:
$$A = A_0 + i B_{1} A_0 + i B_{2} A_0 + i B_{3} A_0 - B_2 B_1 + i B_4
A_0 - B_3 B_1 \ldots.$$
Similarly, we can define
$$E = \biggl( \prod^\infty_{k = 1} (\Id + i B_k)(\Id + F_{k-1})
\biggr) A^{-1}$$
(again the infinite product is successively on the left), via
asymptotic summation:
\begin{align*}
E & = A^{-1} + F_0 A^{-1} + i B_{1} A^{-1} + F_{1} A^{-1} + i B_2
A^{-1} \\
& \quad + i B_3 A^{-1} + F_1 B_1 A^{-1} + F_2 A^{-1} \ldots.
\end{align*}

We can in fact choose $A_0$ and all $B_k$ and $F_k$ to be
quantizations of homogeneous symbols. Let $a_0$ be the homogeneous
symbol corresponding to $A_0$, with $a_0 = e^{i b_0}$. Let $b_k$ be
the homogeneous symbols corresponding to $B_k$ for $k > 0$. For all $k
\geq 0$, let $\rho f_k$ be the homogeneous symbols corresponding to
$F_k$. Lastly, at each step $k \geq 0$, let $p_k =
\sigma_{-k}(P^{(k-1)} - P')$ (well-defined within $U$) be chosen to be
homogeneous. We would then like
\begin{align*}
0 & = \sigma_0((\Id + F_0) A_0 P A_0^{-1} - P') \\
& = \sigma_0( (\Id + F_0) ([A_0, P] A_0^{-1} + P) - P') \\
& = \sigma_0([A_0, P]A_0^{-1} + F_0 P + P  - P') \\
& = \frac{1}{i} \{a_0, \rho^{-1} z\} a_0^{-1} + f_0 z + p_0 \\
& = - \{\rho^{-1} z, b_0\} + f_0 z + p_0.
\end{align*}
For $k > 0$, we would like (note that the symbol is, by
inductive assumption, well defined in $U$)
\begin{align*}
0 & = \sigma_{-k-1}((\Id + F_{k+1}) (\Id + i B_{k+1}) P^{(k)} (\Id + i
B_{k+1})^{-1} - P') \\
& = \sigma_{-k - 1}((Id + F_{k+1}) ( [\Id + i B_{k+1}, P^{(k)}] +
P^{(k)}) - P') \\
& = \sigma_{-k-1}([i B_{k +1}, P^{(k)}] + F_{k+1} P^{(k)} + P^{(k)} -
P') \\
& = -\{\rho^{-1} z, b_{k+1}\} f_{k+1} z + p_{k+1}
\end{align*}

In summary, the desired construction follows from being able to choose
(including $k = 0$) $b_k$ and $f_k$ so that
\begin{equation} \label{thingtosolve}
\{\rho^{-1} z, b_k\} - z f_k = p_k
\end{equation}
in $U$.

Following \cite[Section 4]{guilleminschaeffer}, we solve these
equations in  two steps. First, we solve up to an error term which
vanishes to infinite order on $\Lambda$, and then we solve away this
error.

By equation~\eqref{hamilton} and the homogeneity of each $b_k$,  
$$\{\rho^{-1} z, b_k\} = (\rho \partial_\rho + \theta \cdot \partial_\theta - z \partial_z) b_k = (\theta \cdot \partial_\theta - z \partial_z + k) b_k.$$
We can thus reduce \eqref{thingtosolve} to solving
\begin{equation} \label{newthingtosolve}
(\theta \cdot \partial_\theta - z \partial_z + k) \tilde b_k + z \tilde f_k = \tilde p_k
\end{equation}
where $\tilde b_k = \rho^{-k} b_k$, $\tilde f_k = \rho^{-k} f_k$, and $\tilde p_k = \rho^{-k} p_k$ are homogeneous of degree $0$, and hence can be identified with functions of $y, \theta$, and $z$. Note that, for all $a \in \Z_{\geq 0}$ and multiindices $\alpha$,
$$(\theta \cdot \partial_\theta - z \partial_z + k) (\theta^\alpha
z^a) = (|\alpha| - a + k) \theta^\alpha z^a.$$

Thus, as formal power series in $\theta$ and $z$, we can solve
\eqref{newthingtosolve}, as 
\begin{itemize}
  \item all terms in $\tilde p_k$ with positive powers of $z$ can be
    absorbed by $f_k$,
    \item for $k = 0$, we use the assumption that $p_0$ vanishes at
      $\theta = 0, z = 0$, and
      \item for $k > 0$, the $k$ term allows us to take care of
        nonvanishing terms.
\end{itemize}
Hence we can solve
\eqref{newthingtosolve} up to a term which vanishes to infinite order
at $\theta = 0, z = 0$. Note that this can be done globally (at least,
where the coordinates are defined), so we have this in any
neighborhood $U$ in which the coordinates are defined, for all $k$.

It remains to show that there exists a neighborhood $U$ of $q_0$ so that, for all $k$ and $g(y, z, \theta)$ vanishing to infinite order at $\{\theta = 0, z = 0\}$, there exists $f(y, z, \theta)$ so that
$$(\theta \partial_\theta - z \partial_z + k) f = g$$
in $U$. This is precisely the setting of
Lemma~\ref{lem:vanishingerror}, as we can choose $\chi$ to be a
compactly supported cutoff function which is identically $1$ in some
neighborhood $U$ of the origin with support in which the coordinates
are defined, and replace $g$ by $\chi g$. This completes the
proof.

\end{proof}

\bibliographystyle{plain}

\bibliography{radial_points}

\end{document}